\documentclass[11pt, a4paper]{amsart}
\usepackage{a4wide}

\usepackage[pdftex,
            pdftitle={Fast generation of isotropic Gaussian random fields on the sphere},
            pdfauthor={P.E. Creasey and A. Lang},
            bookmarksopen,
            colorlinks,
            linkcolor=black,
            urlcolor=black,
	   			 	citecolor=black
]{hyperref}

\usepackage{graphicx}
\usepackage{algorithm}
\usepackage{algpseudocode}

\newcommand{\mmax}{m_{\max}}


\newcommand{\R}{\mathbb{R}}
\newcommand{\C}{\mathbb{C}}
\newcommand{\N}{\mathbb{N}}
\newcommand{\Z}{\mathbb{Z}}
\DeclareMathOperator{\E}{\mathbb{E}}
\newcommand{\IS}{\mathbb{S}}
\newcommand{\IA}{\mathbb{A}}

\DeclareMathOperator{\Cov}{Cov}

\let\Re\relax
\DeclareMathOperator{\Re}{Re}
\let\Im\relax
\DeclareMathOperator{\Im}{Im}

\newcommand{\gd}{\delta}

\newcommand{\gk}{\kappa}

\newcommand{\gD}{\Delta}

\newcommand{\gO}{\Omega}



\newcommand{\cD}{\mathcal{D}} 

\newcommand{\cL}{\mathcal{L}}

\newcommand{\cN}{\mathcal{N}}

\newcommand{\cS}{\mathcal{S}}

\newcommand{\cU}{\mathcal{U}}

\newcommand{\cY}{\mathcal{Y}}

\newcommand{\dd}{\mathrm{d}}

\newtheorem{lemma}{Lemma}[section]

\theoremstyle{remark}

\theoremstyle{definition}

%
%

\begin{document}

\title[Fast generation of isotropic GRFs on the sphere]{Fast generation of isotropic Gaussian random fields on the sphere}

\author[P.E.~Creasey]{Peter~E.~Creasey} \address[Peter~E.~Creasey]{\newline Department of Physics and Astronomy
\newline University of California, Riverside
\newline California 92507, USA.} \email[]{peter.creasey@ucr.edu}

\author[A.~Lang]{Annika Lang} \address[Annika Lang]{\newline Department of Mathematical Sciences
\newline Chalmers University of Technology \& University of Gothenburg
\newline 412 96 G\"oteborg, Sweden.} \email[]{annika.lang@chalmers.se}

\thanks{
\emph{Acknowledgement.}
PEC would like to thank Ben Lowing and AL the members of the KAW project ``Stochastics for big data and big systems'' for fruitful discussions.
The work of the AL was supported in part by the Knut and Alice Wallenberg foundation
and the Swedish Research Council under Reg.~No.~621-2014-3995. 
}

\date{\today}
\subjclass{60G60, 60G15, 35J08, 65T50}
\keywords{Gaussian random fields, isotropic random fields, Gaussian Markov random fields, Fast Fourier Transform, efficient simulation.}

\begin{abstract}
The efficient simulation of isotropic Gaussian random fields on the unit sphere is a task encountered frequently in numerical applications. A fast algorithm based on Markov properties and fast Fourier Transforms in 1d is presented that generates samples on an $n \times n$ grid in $\operatorname{O}(n^2 \log n)$. Furthermore, an efficient method to set up the necessary conditional covariance matrices is derived and simulations demonstrate the performance of the algorithm. An open source implementation of the code has been made available at \url{https://github.com/pec27/smerfs}.
\end{abstract}
\maketitle

\section{Introduction}

Methods for modelling spatially distributed data with isotropic random fields occur in many areas, including astrophysics \cite{Bardeen_1986, Hoffman_1991}, geophysics \cite{Rue_Held_2005}, optics \cite{Mendez_1987}, image processing \cite{Cohen_1991} and computer graphics \cite{Miller_1986}. The building block for almost all of which is the Gaussian random field in~$\mathbb{R}^d$, although many applications require realisations in other geometries, the most important of which may be the unit sphere, $\IS^2$, especially for geophysics and astrophysics, which often use spherical data.  

The computational complexity of realising an $n \times n$ lattice of points of a Gaussian random field in~$\R^d$ depends considerably upon the structure of the covariance function. In the worst (non-isotropic) case we must calculate the covariance between each point and perform a Cholesky decomposition of the resulting matrix, at a cost of $\operatorname{O}(n^6)$ operations \cite{Rue_Held_2005}. Fortunately, the majority of Gaussian random fields we are interested in are isotropic (i.e.\ the covariance depends only on the geodesic distance between points), allowing us to perform a spectral decomposition into harmonic (Fourier) components. In the periodic case on~$\R^2$ (i.e.\ a torus) with a regular lattice, this provides a dramatic computational improvement due to the existence of the fast Fourier transform (FFT) algorithm, which allows the realisation to be computed in $\operatorname{O}(n^2 \log n^2)$ operations.

On~$\IS^2$, similar results for isotropic Gaussian random fields also apply, i.e., we can perform a spectral decomposition into the spherical harmonic functions. Unfortunately the corresponding spherical transform algorithm has fewer symmetries to exploit, and for a general $n \times n$ grid on the sphere, $\operatorname{O}(n^4)$ steps are required for an isotropic covariance function. 
If one chooses an iso-latitude system such as the equidistant cylindrical projection (i.e.\ a regular grid in $\theta$ and $\phi$, the inclination and azimuthal angles) or the HEALPix discretisation (\cite{Gorski_2005}, as used for the cosmic microwave background data of WMAP) one can exploit the Fourier element of the azimuthal angle to reduce this to $\operatorname{O}(n^3 \log n)$ (see e.g.~\cite{Driscoll_1994}).

As one moves to higher and higher resolutions, however, it becomes even more attractive to find algorithms with superior scaling. 
One such possibility is to consider Gaussian Markov random fields (hereafter GMRFs) (e.g.\ \cite{Whittle_1954, McKean_1963, Pitt_1971, Tewfik_1991, Moura_1997, Vats_Moura_2011}) which extend the Markov property to higher dimensions and orders.
The essential property of these fields is that the probability measure for a conditional realisation of these fields depends upon only the boundary of the constraining set (and its derivatives) rather than the full set.
Such a property allows one to construct a realisation recursively, i.e.\ to iteratively expand the realisation around its boundary, a process sometimes described as a telescoping representation~\cite{Vats_Moura_2011}.

The requirement for an isotropic GMRF to obey this property is that the covariance function is the Green's function of an operator that is a polynomial in the Laplacian (with appropriate coefficients to ensure strong ellipticity, see e.g.~\cite{Moura_1997}). 
This turns out to describe rather a large class of covariance functions~\cite{Tewfik_1991}.

The contribution of this paper is an algorithm that generates samples on an $n \times n$ grid of the sphere in $\operatorname{O}(n^2 \log n)$ once the conditional covariance matrices are computed. This is achieved by decomposing the field into 1d Gaussian Markov random fields. These can be sampled together with the derivatives point by point and then transformed to an isotropic Gaussian random field on the unit sphere by FFT.

The paper is structured as follows: In Section~\ref{sec:theory} we derive the decomposition of an isotropic Gaussian random field into 1d GMRFs via Fourier transforms and compute the conditional covariance matrices. A computationally efficient method to set up the conditional covariance matrices is presented in Section~\ref{sec:eff_cov_comp}. We collect the results of the previous sections in Section~\ref{sec:alg}, where we present the algorithms explicitly. Finally, the performance and convergence of the introduced algorithm is demonstrated in a simulation example in Section~\ref{sec:simulation}. Our implementation is available online at \url{https://github.com/pec27/smerfs}.

\section{Decomposition of isotropic Gaussian random fields into 1d Gaussian Markov random fields}\label{sec:theory}

Let us assume that $T$ is a zero mean
$2$-weakly isotropic Gaussian random field (GRF for short) on the unit sphere~$\IS^2$ in~$\R^3$, i.e.\ on
\begin{equation*}
 \IS^2 := \{ x \in \R^3 : \|x\| = 1\},
\end{equation*}
where $\| \cdot \|$ denotes the Euclidean norm. Then $T$ admits an expansion with respect to the \emph{surface spherical harmonic functions} 
$\cY := (Y_{\ell m}, \ell \in \N_0, m=-\ell, \ldots, \ell)$ 
as mappings 
$Y_{\ell m}: [0,\pi] \times [0,2\pi) \rightarrow \C$, 
which are given by
  \begin{equation*}
   Y_{\ell m}(\theta, \phi)
    := \sqrt{\frac{2\ell + 1}{4\pi}\frac{(\ell-m)!}{(\ell+m)!}} P_{\ell m}(\cos \theta) e^{im\phi}
  \end{equation*}
for $\ell \in \N_0$, $m = 0,\ldots, \ell$, and $(\theta,\phi) \in [0,\pi] \times [0,2\pi)$ and by
\begin{equation*}
   Y_{\ell m}
    := (-1)^m \overline{Y_{\ell -m}}
\end{equation*}
for $\ell \in \N$ and $m=-\ell, \ldots,-1$.
Here $(P_{\ell m}, \ell \in \N_0,m=0,\ldots,\ell)$ denote the \emph{associated Legendre polynomials} which are given by
\begin{equation*}
   P_{\ell m}(\mu)
    := (-1)^m (1-\mu^2)^{m/2} \frac{\partial^m}{\partial \mu^m} P_\ell(\mu)
\end{equation*}
for $\ell \in \N_0$, $m = 0,\ldots,\ell$, and $\mu \in [-1,1]$,
where 
$(P_\ell, \ell \in \N_0)$ are the \emph{Legendre polynomials} 
given by Rodrigues' formula (see, e.g.~\cite{Szego})
\begin{equation*}
P_\ell(\mu)
:= 
2^{-\ell} \frac{1}{\ell!} \, \frac{\partial^\ell}{\partial \mu^\ell} (\mu^2 -1)^\ell
\end{equation*}
for all $\ell \in \N_0$ and $\mu \in [-1,1]$.
Associated Legendre polynomials with negative $m=-\ell,\ldots,-1$ satisfy
  \begin{equation*}
   P_{\ell m} = (-1)^{|m|} \frac{(\ell - |m|)!}{(\ell + |m|)!} P_{\ell -m}.
  \end{equation*}
This expansion of~$T$ is given by (see, e.g.~\cite[Corollary~2.5]{LS15})
  \begin{equation*}
   T
    = \sum_{\ell=0}^\infty \sum_{m=-\ell}^\ell a_{\ell m} Y_{\ell m},
  \end{equation*}
where $\IA := (a_{\ell m}, \ell \in \N_0, m= - \ell, \ldots, \ell)$ 
is a sequence of complex-valued, centred, Gaussian random variables 
with the following properties:
\begin{enumerate}
 \item $\IA_+ := (a_{\ell m}, \ell \in \N_0, m = 0,\ldots,\ell)$ is a  
       sequence of independent, complex-valued Gaussian random variables.
 \item The elements of $\IA_+$ with $m>0$ satisfy  
       $\Re a_{\ell m}$ and $\Im a_{\ell m}$ independent and $\cN(0,C_\ell/2)$ distributed.
   \item 
   The elements of $\IA_+$ with $m=0$ are real-valued and
$\cN(0,C_\ell)$ distributed.
   \item\label{eq:cond_al-m} The elements of~$\IA$ with $m <0$ are deduced from those of $\IA_+$
    by the formulae
    \begin{equation*}
    a_{\ell m} = (-1)^m \, \overline{a_{\ell-m}}.
    \end{equation*}
  \end{enumerate}
Here $(C_\ell, \ell \in \N_0)$ is called the \emph{angular power spectrum}.

In what follows let us consider the case that there exist $\gk_i \in \C$
such that
\begin{equation}\label{eq:Cl}
 C_\ell := \prod_{i=1}^M (\gk_i + \ell(\ell+1))^{-1}
\end{equation}
for all $\ell \in \N_0$, i.e.\ $C_\ell^{-1}$ is an eigenvalue of
\begin{equation*}
 \cL := \prod_{i=1}^M (\gk_i - \gD_{\IS^2})
\end{equation*}
with corresponding eigenfunctions $(Y_{\ell m}, m = -\ell,\ldots,\ell)$,
where $\gD_{\IS^2}$ denotes the \emph{spherical Laplacian} (also known as the \emph{Laplace--Beltrami operator}).
In the spirit of~\cite{LRL11}, $T$ is the solution of the stochastic partial differential equation
\begin{equation*}
 \cL^{1/2} T = W,
\end{equation*}
where $W$ is white noise which admits the formal Karhunen--Lo\`eve expansion
\begin{equation*}
 W = \sum_{\ell=0}^\infty \sum_{m=-\ell}^\ell \eta_{\ell m} Y_{\ell m}.
\end{equation*}
Here $(\eta_{\ell m},\ell \in \N, m= 1, \ldots, \ell)$ is a set of independent, complex-valued standard normally distributed random variables independent of the real-valued standard normally distributed random variables $(\eta_{\ell 0}, \ell \in \N_0)$. For $m<0$ the same relations as in Condition~\eqref{eq:cond_al-m} hold.
For fixed $m \in \Z$, define the random field $g_m: \gO \times [-1,1] \rightarrow \R$ by
\begin{equation*}
 g_m(z) := \frac{1}{2\pi} \int_0^{2\pi} T(\theta, \phi) e^{-i m \phi} \, \dd \phi
\end{equation*}
for $z := \cos \theta \in [-1,1]$.
We observe that
 \begin{equation*}
  T
    = \sum_{\ell=0}^\infty \sum_{m=-\ell}^\ell a_{\ell m} Y_{\ell m}
    = \sum_{m = -\infty}^\infty \sum_{\ell \ge |m|} a_{\ell m} Y_{\ell m}
 \end{equation*}
and obtain for any $m_0 \in \Z$
 \begin{equation*}
  g_{m_0}(z)
    = \sum_{m = -\infty}^\infty \sum_{\ell \ge |m|} a_{\ell m} L_{\ell m}(z) 
	  \frac{1}{2\pi} \int_0^{2\pi} e^{im\phi} e^{-i m_0 \phi} \, \dd \phi
    = \sum_{\ell \ge |m_0|} a_{\ell m_0} L_{\ell m_0}(z),
 \end{equation*}
where we set
  \begin{equation*}
  L_{\ell m}(z)
    := \sqrt{\frac{2\ell + 1}{4\pi}\frac{(\ell-m)!}{(\ell+m)!}} P_{\ell m}(z).
 \end{equation*}
Moreover, we obtain for $m_0 < 0$ the relation
 \begin{align*}
  g_{m_0}
    & = \sum_{\ell \ge |m_0|} a_{\ell m_0} L_{\ell m_0}
    = \sum_{\ell \ge |m_0|} (-1)^{m_0} \, \overline{a_{\ell -m_0}} 
		    \sqrt{\frac{2\ell + 1}{4\pi}\frac{(\ell-m_0)!}{(\ell+m_0)!}}
		    (-1)^{|m_0|} \frac{(\ell + m_0)!}{(\ell - m_0)!} P_{\ell -m_0}\\
    & = \sum_{\ell \ge |-m_0|} \overline{a_{\ell -m_0}} L_{\ell -m_0}
    = \overline{g_{-m_0}}.
 \end{align*}
Since $g_{m_0}$ is generated by a sum of centred Gaussian random variables, it is clear that $g_{m_0}$ is centred Gaussian.
In the following lemma, we show that they are independent for positive~$m$ and compute the covariance.

\begin{lemma}\label{lem:cov_gm}
 The sequence $(g_m, m \in \N_0)$ consists of pairwise independent centred Gaussian random fields on~$[-1,1]$ with covariance given by
 \begin{equation*}
  C_{g_m}(z_1,z_2)
    := \E(\overline{g_m(z_1)} g_m(z_2)) 
    = \sum_{\ell \ge |m|} C_\ell L_{\ell m}(z_1) L_{\ell m}(z_2), 
 \end{equation*}
 while the functions $(g_m,m<0)$ with negative index are determined by the relation
 \begin{equation*}
  g_m = \overline{g_{-m}}.
 \end{equation*}
\end{lemma}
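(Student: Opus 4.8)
The plan is to treat the three assertions in turn, reducing everything to the explicit series representation $g_m(z) = \sum_{\ell \ge |m|} a_{\ell m} L_{\ell m}(z)$ derived just above, together with the distributional properties (1)--(4) of the coefficient family $\IA$. The negative-index claim $g_m = \overline{g_{-m}}$ for $m<0$ is already established by the computation preceding the lemma, so I would simply invoke it. The Gaussianity and centredness of each $g_m$ with $m \ge 0$ follow because $g_m(z)$ is an $L^2(\gO)$-limit of finite linear combinations of the centred Gaussian coefficients $a_{\ell m}$, and such limits are again centred Gaussian. The requisite $L^2$-convergence is guaranteed by the summability $\sum_{\ell}(2\ell+1)C_\ell < \infty$ coming from the $2$-weak isotropy of $T$, together with the pointwise bound $L_{\ell m}(z)^2 \le (2\ell+1)/(4\pi)$ (a single nonnegative summand of the addition-theorem identity $\sum_{m=-\ell}^\ell L_{\ell m}(z)^2 = (2\ell+1)/(4\pi)$); this same summability legitimises every interchange of expectation with an infinite sum below.

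For the covariance I would first record that $L_{\ell m}$ is real-valued on $[-1,1]$, since both the prefactor under the square root and $P_{\ell m}(z)$ are real for $m \ge 0$, so that $\overline{g_m(z_1)} = \sum_{\ell \ge m} \overline{a_{\ell m}}\, L_{\ell m}(z_1)$. Expanding the product $\overline{g_m(z_1)}\,g_m(z_2)$ and taking expectations termwise reduces the computation to the second moments $\E(\overline{a_{\ell m}}\, a_{\ell' m})$. The central calculation is that this equals $C_\ell\,\delta_{\ell\ell'}$: off-diagonal terms vanish because distinct elements of $\IA_+$ are independent and centred, while on the diagonal one uses $\E[|a_{\ell m}|^2] = \E[(\Re a_{\ell m})^2] + \E[(\Im a_{\ell m})^2] = C_\ell/2 + C_\ell/2 = C_\ell$ for $m>0$ and $\E[a_{\ell 0}^2] = C_\ell$ for $m=0$. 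Substituting then yields the claimed formula $C_{g_m}(z_1,z_2)=\sum_{\ell \ge m} C_\ell\, L_{\ell m}(z_1)\,L_{\ell m}(z_2)$.

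The pairwise independence is most cleanly obtained not from a covariance computation but from a measurability argument, which has the advantage of sidestepping the complex structure entirely: for complex Gaussian fields, vanishing of the cross-covariance $\E(\overline{g_m(z_1)}\,g_{m'}(z_2))$ alone would not give independence, as one would additionally have to control the pseudo-covariance $\E(g_m(z_1)\,g_{m'}(z_2))$. Instead, for each fixed $m \ge 0$ the field $g_m$ is, as an $L^2$-limit, measurable with respect to $\cF_m := \sigma(a_{\ell m} : \ell \ge m)$. For distinct $m, m' \in \N_0$ the index sets $\{(\ell,m):\ell\ge m\}$ and $\{(\ell,m'):\ell\ge m'\}$ are disjoint subsets of the index set of the independent family $\IA_+$, so $\cF_m$ and $\cF_{m'}$ are independent and hence so are $g_m$ and $g_{m'}$; in fact the whole family $(g_m)_{m\ge 0}$ is then mutually independent, which is stronger than asserted.

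I expect the main obstacle to be bookkeeping rather than anything conceptual: keeping the complex conjugation and the real-valuedness of $L_{\ell m}$ straight throughout the second-moment computation, and carefully separating the $m=0$ case (real coefficients, variance $C_\ell$) from the $m>0$ case (circularly symmetric complex coefficients with $\Re$ and $\Im$ parts of variance $C_\ell/2$) so that both land on exactly $C_\ell$ on the diagonal. The only genuinely analytic point, namely the $L^2$-convergence of the defining series and the resulting permission to exchange $\E$ with the summations, is handled uniformly by the summability of the angular power spectrum inherited from the $2$-weak isotropy of $T$.
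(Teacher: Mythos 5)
Your proposal is correct, and your covariance computation coincides with the paper's: both expand $\overline{g_m(z_1)}\,g_m(z_2)$ termwise and reduce everything to the second moments $\E(\overline{a_{\ell m}}\,a_{\ell' m'}) = \delta_{\ell \ell'}\delta_{m m'} C_\ell$, splitting the diagonal into the $m=0$ case (real, variance $C_\ell$) and the $m>0$ case ($C_\ell/2 + C_\ell/2$). Where you genuinely diverge is the independence argument. The paper stays entirely at the level of second moments: it computes the cross-covariance $\E\bigl(\overline{g_{m_0}(z_1)}\,g_{m_1}(z_2)\bigr) = \delta_{m_0 m_1}\sum_{\ell} C_\ell L_{\ell m_0}(z_1) L_{\ell m_1}(z_2)$ \emph{and}, crucially, the pseudo-covariance $\E\bigl(g_{m_0}(z_1)\,g_{m_1}(z_2)\bigr) = \delta_{m_0 (-m_1)}\sum_{\ell} C_\ell L_{\ell m_0}(z_1) L_{\ell m_1}(z_2)$, and only then invokes that uncorrelated jointly Gaussian variables are independent — so the complex-Gaussian subtlety you correctly flag (vanishing covariance alone does not suffice) is precisely what the paper's second computation is there to handle; it is not an oversight that your route must repair, but your instinct to worry about it was sound. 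Your alternative via $\sigma$-algebras — $g_m$ is measurable with respect to $\sigma(a_{\ell m} : \ell \ge m)$ as an $L^2$-limit, and for distinct $m, m' \in \N_0$ these index sets are disjoint within the independent family $\IA_+$ — sidesteps the pseudo-covariance computation entirely and yields mutual independence of the whole family, strictly stronger than the pairwise claim; its only cost is the routine measurability-of-limits step (pass to an almost surely convergent subsequence, harmless since independence is a distributional property). You are also more careful than the paper on the preliminary points: the paper simply declares $g_m$ centred Gaussian as a sum of centred Gaussian random variables and takes termwise expectations without comment, whereas you justify the $L^2$-convergence via $\sum_\ell (2\ell+1) C_\ell < \infty$ and the addition-theorem bound $L_{\ell m}(z)^2 \le (2\ell+1)/(4\pi)$, which is exactly what legitimises the interchanges both proofs rely on.
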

\begin{proof}
 We have already seen that $g_m$ is centred Gaussian and that $g_m = \overline{g_{-m}}$. It remains to show independence as well as to compute the covariance. Therefore let us fix $m_0, m_1 \in \Z$ and $z_1,z_2 \in [-1,1]$. By the previous computations it holds that
 \begin{align*}
  \E(\overline{g_{m_0}(z_1)} g_{m_1}(z_2))
    & = \sum_{\ell_0 \ge |m_0|} \sum_{\ell_1 \ge |m_1|} \E(\overline{a_{\ell_0 m_0}} a_{\ell_1 m_1} ) L_{\ell_0 m_0}(z_1) L_{\ell_1 m_1}(z_2)\\
    & = \sum_{\ell \ge \max\{|m_0|, |m_1|\}} \E(\overline{a_{\ell m_0}} a_{\ell m_1} ) L_{\ell m_0}(z_1) L_{\ell m_1}(z_2)\\
    & = \gd_{m_0 m_1} \sum_{\ell \ge |m_0|} C_\ell L_{\ell m_0}(z_1) L_{\ell m_1}(z_2)
 \end{align*}
 using the properties of the random variables in~$\IA$, where $\gd_{m_0 m_1} = 1$ for $m_0 = m_1$ and $0$ else.
 Similarly we obtain
 \begin{equation*}
  \E(g_{m_0}(z_1) g_{m_1}(z_2))
    = \gd_{m_0 (- m_1)} \sum_{\ell \ge |m_0|} C_\ell L_{\ell m_0}(z_1) L_{\ell m_1}(z_2),
 \end{equation*}
 which
finishes the proof, since uncorrelated Gaussian random variables are independent.
\end{proof}

We define the operators
\begin{equation*}
 \cL_m := \frac{\dd}{\dd z} (1-z^2) \frac{\dd}{\dd z} - \frac{m^2}{1-z^2}
\end{equation*}
and
\begin{equation*}
 \cD_m := \prod_{i=1}^M (\gk_i - \cL_m).
\end{equation*}
Let us recall the associated Legendre differential equations for $\ell \in \N_0$ and $m=-\ell,\ldots,\ell$ (see, e.g.,~\cite{AMS})
\begin{equation*}
 \frac{\dd}{\dd z} \left((1-z^2) \frac{\dd}{\dd z}u(z)\right) + \left(\ell(\ell+1) - \frac{m^2}{1-z^2}\right) u(z) = 0,
\end{equation*}
which is solved by the associated Legendre polynomials. 
Therefore we obtain that
\begin{equation*}
 \cD_m L_{\ell m}
    = \prod_{i=1}^M (\gk_i + \ell(\ell+1)) L_{\ell m}
    = C_\ell^{-1} L_{\ell m},
\end{equation*}
i.e.\ $g_m$ solves the stochastic  differential equation
\begin{equation*}
 \cD_m^{1/2} g_m = W_m,
\end{equation*}
where $W_m$ is white noise on~$[-1,1]$ 
and $\cD_m^{1/2}$ is a well-defined operator of order~$M$ since $\cD_m$ is of order~$2M$ with positive spectrum. 

For $\tau \in (-1,1)$ let us first consider the open sets $A_\tau = (\tau,1)$ with complements $A^c_\tau = (-1,\tau]$. Since white noise satisfies the strong Markov property, we obtain for all $z \in A_\tau$ that
\begin{equation*}
 P\left(\cD_m^{1/2} g_m(z)\,|\, \cD_m^{1/2} g_m(s), s \in A^c_\tau\right)
  = P\left(\cD_m^{1/2} g_m(z)\,|\, \cD_m^{1/2} g_m(\tau)\right).
\end{equation*}
and $g_m$ is generated by a stochastic differential equation as in~\cite[Eqn.~(2.1.21)]{R82}. However, $g_m$ does not satisfy it but it follows from~\cite{R82} that the $M$-dimensional vector of derivatives up to order~$M-1$,
\begin{equation*}
g_m^{(0\ldots M-1)} = (g_m^{(0)}, g_M^{(1)}, \ldots, g_m^{(M-1)})^T
\end{equation*}
have the property. In other words, we obtain for all $z \in A_\tau$
\begin{equation*}
 P\left(g_m^{(0\ldots M-1)}(z)\,|\, g_m^{(0\ldots M-1)}(s), s \in A^c_\tau\right)
  = P\left(g_m^{(0\ldots M-1)}\,|\, g_m^{(0\ldots M-1)}(\tau)\right).
\end{equation*}
Therefore it is sufficient to know $g_m^{(p)}(\tau)$ for $p= 0,\ldots, M-1$ when generating a sample value for any $z > \tau$ where the random field is already constructed for all $z < \tau$. Since we are considering Gaussian fields which have Gaussian derivatives, it is sufficient to know the mean and the covariance between any two points and derivatives up to order~$M-1$.

Since all derivatives are well-defined, we obtain the covariance functions, which we will refer to as \emph{cross covariances},
\begin{equation*}
 C_{g_m,p,q}(z_1,z_2) 
  := \E\left(\overline{g_m^{(p)}(z_1)} g_m^{(q)}(z_2)\right)
  = \frac{\partial^{p+q}}{\partial z_1^p \partial z_2^q} C_{g_m}(z_1,z_2).
\end{equation*}
These derivatives can analytically be calculated with the identities of the associated Legendre polynomials.
We will discuss a computational efficient method in Section~\ref{sec:eff_cov_comp}. Let us set
\begin{equation*}
 J^m_{pq}(z_1,z_2) := C_{g_m,p,q}(z_1,z_2)
\end{equation*}
such that $J^m(z_1,z_2)$ is the matrix consisting of all covariances between different derivatives at $z_1$ and $z_2$. The conditional distribution of $g_m^{(0\ldots M-1)}(z_2)$ given $g_m^{(0\ldots M-1)}(z_1)$ is known to be Gaussian with conditional mean
\begin{align}\label{eq:trans_matrix}
 \begin{split}
  \E\left(g_m^{(0\ldots M-1)}(z_2)|g_m^{(0\ldots M-1)}(z_1)\right)
    & = J^m(z_1,z_2) (J^m)^{-1}(z_1,z_1)g_m^{(0\ldots M-1)}(z_1)\\
    & =: A^m(z_1, z_2) g_m^{(0\ldots M-1)}(z_1)
 \end{split}
\end{align}
and conditional covariance matrix
\begin{align}\label{eq:innov_matrix}
 \begin{split}
 \Cov\left(g_m^{(0\ldots M-1)}(z_2)|g_m^{(0\ldots M-1)}(z_1)\right)
  & = J^m(z_2, z_2) - J^m(z_1, z_2)  (J^m)^{-1}(z_1,z_1) (J^m)^T(z_1,z_2)\\
  & = J^m(z_2, z_2) - J^m(z_1, z_2) (A^m)^T(z_1,z_2)\\
  & =: B^m (B^m)^T(z_1,z_2).
 \end{split}
\end{align}

Using the values $g_m^{(0\ldots M-1)}(z_1)$ at $z_1$ for non-negative~$m$, it is straightforward to sample the vector $g_m^{(0\ldots M-1)}(z_2)$ from the conditional distribution by
\begin{equation}\label{eq:state_space}
g_m^{(p)}(z_2) = \sum_{q=0}^{M-1} A^m_{pq}(z_1, z_2)  g_m^{(q)}(z_1) + \sum_{q=0}^{M-1} B^m_{pq}(z_1, z_2) w^m_q,
\end{equation}
where the random variables $(w^m_q, q=0,\ldots,M-1)$ are independent complex-valued standard normally distributed for $m>0$, and independent standard normally distributed for $m=0$.
This construction is sometimes referred to as a state-space model~\cite{Brogan_1991}.

Note that we will also need an \emph{initial} sampling, and for this we choose the equator ($z=0$), i.e.
\begin{equation}\label{eq:state_initial}
g_m^{(p)}(0) = \sum_{q=0}^{M-1} B^m_{{\text{eq}}\;pq} w^m_q,
\end{equation}
where 
\begin{equation*}
  B^m_{\text{eq}} (B^m_{\text{eq}})^T := J^m(0, 0).
\end{equation*}

\section{Efficient covariance computation}\label{sec:eff_cov_comp}

We have shown in Lemma~\ref{lem:cov_gm} that
 \begin{equation*}
  C_{g_m}(z_1,z_2)
    = \E(\overline{g_m(z_1)} g_m(z_2))
    = \sum_{\ell \ge |m|} C_\ell L_{\ell m}(z_1) L_{\ell m}(z_2), \label{eq:bilinear}
 \end{equation*}
i.e.\ we obtained a representation in term of the associated Legendre polynomials. This allows us to compute it but from an algorithmic perspective, the computation is expensive for large~$m$ and~$\ell$ as the recurrence relations are unsuitable for numerical computation with limited precision. As such we find an alternative formulation in terms of Legendre functions that is numerically superior at even moderate-$m$ values.

Let us assume that the $\kappa_i$ are distinct in Eqn.~\eqref{eq:Cl} and have the representation  $\kappa_i = - \lambda_i (\lambda_i+1)$ with $\lambda_i \in \C \setminus \Z$ to avoid singularities. More specifically, for $\lambda_i \in \Z$, either $\ell = \lambda_i$ or $\ell = -\lambda-1$ is in~$\N_0$ and therefore $C_\ell$ is not well-defined.
A typical example is $M=2$ and $C_\ell = (a^2 +\ell^2 (\ell+1)^2)^{-1}$ with $a \in \R$, which satisfies the desired partial fraction decomposition with $\kappa_1 = i a$ and $\kappa_2 = -ia$.
Then the coefficients~$C_\ell$ in Eqn.~\eqref{eq:Cl} can be decomposed (via partial fractions) into
\begin{equation}\label{eq:C_ell_efficient}
C_\ell = \sum_{i=1}^M \frac{b_i}{\ell(\ell+1) - \lambda_i \left(\lambda_i+1 \right)}
\end{equation}
for some finite constants $b_i \in \C$, provided the $\kappa_i$ are distinct.

We note that the partial sums in Eqn.~(\ref{eq:bilinear}) can be written in the form
\begin{equation*}
 C_{g_m}(z_1, z_2) 
  = \frac{1}{2\pi}\sum_{i=1}^M b_i G^m_{\lambda_i} (z_1,z_2)
\end{equation*}
where 
\begin{equation*}
G^m_{\lambda_i} (z_1,z_2) = 2\pi \sum_{\ell \ge |m|} \frac{ L_{\ell m}(z_1) L_{\ell m}(z_2)}{\ell(\ell+1) - \lambda_i \left(\lambda_i+1 \right)} , 
\end{equation*}
which is nothing more than the bilinear expansion for the Green's function \cite[Eqn.~(17.72)]{Riley06} for
the operator $\cD_m = \kappa_i -\cL_m$. This operator can be alternatively formulated as the solution to the differential equation
\begin{equation}\label{eq:generalized_Legendre_eqn} 
 \cD_m G^m_\lambda (x,y)  = \delta (x-y) .
\end{equation}
One eigenfunction of this operator with eigenvalue zero is the associated Legendre function $P^m_{\lambda_i}$, where $P^m_{\lambda_i}$ denotes the generalisation of $P_{\ell m}$ to non-integers $\lambda_i$.

Associated Legendre polynomials can be generalised to complex degree $\lambda$ and order $\mu$ as
\begin{equation*}
 P^\mu_\lambda (z) 
  = \frac{1}{\Gamma(1-\mu)} \left(\frac{1+z}{1-z}\right)^{\mu/2} \,_2F_1 \left(-\lambda, \lambda+1; 1-\mu; \frac{1-z}{2} \right),
\end{equation*}
via the (Gauss) hypergeometric function
\begin{equation*}
 _2F_1 \left(a,b;c; x \right) 
  = \sum_{n=0}^\infty \frac{(a)_n (b)_n}{(c)_n} \frac{x^n}{n!},
\end{equation*}
where $(x)_n$ denotes the rising Pochhammer symbol. The relevant case of $\mu$ being an integer (i.e.\ $\mu=m$) is only defined in the limit, which we find by (see \cite[15.3.3]{AMS})
\begin{equation*}
_2F_1 \left(a,b;c; x \right) = (1-x)^{c-a-b} \, _2F_1 \left(c-a,c-b;c; x \right) ,
\end{equation*}
and (see \cite[15.1.2]{AMS})
\begin{equation*}
\lim_{c\to -m} \frac{1}{\Gamma(c)} \, _2F_1 \left(a,b;c; x \right) 
  = \frac{(a)_{m+1} (b)_{m+1}}{(m+1)!} x^{m+1}\,  _2F_1 \left(a+m+1,b+m+1;m+2; x \right),
\end{equation*}
to give
\begin{equation}\label{eq:Legendre_hypergeometric}
P^m_\lambda (z) 
 = \lim_{\mu \to m} P^\mu_\lambda (z)
 = \frac{(-\lambda)_m (1+\lambda)_m}{m!} \left(\frac{1-z}{1+z}\right)^{m/2} 
      \,_2F_1 \left(-\lambda, \lambda+1; 1+m; \frac{1-z}{2} \right).
\end{equation}
This function is well-defined at $z=1$ with $P^m_\lambda (1) = \delta_{m0}$
and singular at $z=-1$ for all $m \in \N_0$ if $\lambda \notin \Z$.

For a second linearly independent 
solution to~\eqref{eq:generalized_Legendre_eqn}
we use $P^m_{\lambda_i} (-z)$, which is not linearly dependent since $\lambda+m$ is not an integer, see, e.g.~\cite[8.2.3]{AMS}. 
We can then construct the Green's function by using the Wronskian (e.g.\ \cite[14.2.3]{DLMF})
\begin{equation*}
 \mathcal{W} \left\{ P^m_\lambda (z) , P^m_\lambda (-z)  \right\} = \frac{2 \left(1-z^2\right)^{-1}}{\Gamma\left(\lambda+1-m\right) \Gamma\left(-\lambda-m\right)}
\end{equation*}
as
\begin{equation*}
G^m_{\lambda} (x,y) = \frac{1}{2} \Gamma\left(1+\lambda-m\right) \Gamma\left(-\lambda-m\right) \times 
    \begin{cases}
      P^m_\lambda(-x) P^m_\lambda(y) , & x\leq y,  \\
      P^m_\lambda(x) P^m_\lambda(-y) , & x\geq y,  
    \end{cases} 
\end{equation*}
which satisfies continuity, regularity at $x=-1$ and $x=1$, and by substitution
\begin{equation*}
 (-\lambda (\lambda+1) -\cL_m) G^m_\lambda (x,y) 
  = \delta (x-y).
\end{equation*}

In conclusion we obtain that 
\begin{align}\label{eq:Jmatrix_efficient}
  \begin{split}
 & J^m_{pq}(z_1,z_2) \\
  & \qquad = C_{g_m,p,q}(z_1,z_2)
  = \frac{1}{2\pi} \sum_{i=1}^M b_i G^m_{\lambda_i} (z_1,z_2)\\
  & \qquad = \frac{1}{4\pi} \sum_{i=1}^M b_i \Gamma\left(1+\lambda_i-m\right) \Gamma\left(-\lambda_i-m\right) \times 
    \begin{cases}
      (-1)^p P^{m+p}_{\lambda_i}(-z_1) P^{m+q}_{\lambda_i}(z_2) , & z_1 \leq z_2,  \\
      (-1)^q P^{m+p}_{\lambda_i}(z_1) P^{m+q}_{\lambda_i}(-z_2) , & z_1 \geq z_2.
    \end{cases}
 \end{split}
\end{align}
Substituting the associated Legendre functions by the hypergeometric functions as in~\eqref{eq:Legendre_hypergeometric}, i.e.\ setting
\begin{equation*}
 P^{m+p}_{\lambda_i} (z_1) 
 = \frac{(-\lambda_i)_{m+p} (1+\lambda_i)_{m+p}}{(m+p)!} \left(\frac{1-z_1}{1+z_1}\right)^{(m+p)/2} 
      \,_2F_1 \left(-\lambda_i, \lambda_i+1; 1+(m+p); \frac{1-z_1}{2} \right)
\end{equation*}
and similarly for $q$ and $z_2$, we derive our efficient computation method for the cross-covariance matrices $J^m$.

We observe that the above formula requires many re-computations of associated Legendre functions. To be more efficient, we transform the vector~$g_m^{(0\ldots M-1)}$ of derivatives up to order~$M-1$ to a vector of linear combinations of derivatives of increasing order. This does not change anything since we are in the end only interested in the random field itself but not in its derivatives. Therefore define for $m \in \N$ the family of operators $(\cU^q_m, q=0,\ldots,M-1)$ by
 \begin{equation*}
 \cU^q_m f := (-1)^q (1-x^2)^{(m+q)/2} \frac{\partial^q}{\partial x^q} \left[(1-x^2)^{-m/2} f\right],
 \end{equation*}
which satisfies $\cU^q_m P^m_\lambda = P^{m+q}_\lambda$. Thus setting 
 \begin{equation}\label{eq:Jmatrix_more_efficient}
 J^m_{pq}(x, y) := \cU^p_m \cU^q_m C_{g_m}(x,y)
 \end{equation}
is much easier to compute while the properties of the random field itself are not changed. Let us keep the notation $g_m^{(0\ldots M-1)}$ even if we use the modified matrices~$J^m$.

\section{Algorithms}\label{sec:alg}

Below we outline an algorithm for the construction of the isotropic GRF on~$\IS^2$. Since the sampling of the GRF is merely filtered white noise, in our tests the time is dominated by the computation of the (inhomogeneous) filter coefficients, and as such we have described the pre-computation in Alg.~\ref{alg:cov} which just has to be done once and the generation of samples in Alg.~\ref{alg:GRF}.

For $n \in \N$ let us fix a grid $z= (z_{-n},\ldots, z_0, \ldots, z_n)$ on $[-1,1]$ and $\phi = (\phi_0,\ldots, \phi_{2\mmax})$ for some $\mmax \in \N$. One possible symmetric example is to set $z_j = \sin\left(\frac{2\pi j}{2n+1}\right)$, such that
\begin{equation*}
 z_{-n} = \sin\left(-\frac{2n\pi}{2n+1}\right),
 \quad
 z_0 = 0,
 \quad
 z_n = \sin\left(\frac{2n\pi}{2n+1}\right),
\end{equation*}
and $\phi = \pi \, \mmax^{-1}(0, 1, 2,\ldots, 2\mmax-1)$ in $[0,2\pi)$.
Furthermore define the covariance operator $\cL$ by setting $\lambda = (\lambda_1,\ldots,\lambda_M)$ and $b = (b_1,\ldots,b_M)$ in~\eqref{eq:C_ell_efficient}.
\begin{algorithm}[htb!]
\caption{Pre-computation of covariance matrices}\label{alg:cov}
\begin{algorithmic}[0]
\Procedure{Covariance matrices}{$\mmax, z, b, \lambda$}
\For{$m=0,\ldots, \mmax+M-1$}
  \State Set up the hypergeometric functions ${}_2F_1$ of order~$m$ using $z$ and~$\lambda$.
  \For{$p=0,\ldots,M-1$}
    \State Set up the Pochhammer symbols for~$m$ and~$p$ using~$\lambda$.
  \EndFor
\EndFor
\For{$m=0,\ldots, \mmax$}
  \State Compute $J^m(z_0,z_0)$ using~\eqref{eq:Jmatrix_efficient} and the computed functions.
  \State Compute $B^m_{\text{eq}}$ from $J^m(z_0,z_0)$ (e.g.\ via Cholesky decomposition).
  \For{$i=-n+1,\ldots,n$}
    \State Compute $J^m(z_i,z_i)$ using~\eqref{eq:Jmatrix_efficient} or~\eqref{eq:Jmatrix_more_efficient} and the computed functions.
    \State Compute $J^m(z_{i-1},z_i)$ using~\eqref{eq:Jmatrix_efficient} or~\eqref{eq:Jmatrix_more_efficient} and the computed functions.
    \State Compute $A^m(z_{i-1},z_i)$ using~$J^m$ and~\eqref{eq:trans_matrix}.
    \State Compute $B^m(z_{i-1},z_i)$ using~$J^m$, $A^m$, \eqref{eq:innov_matrix} and, e.g.\ Cholesky decomposition.
  \EndFor
\EndFor
\State Return $A = (A^m, m=0,\ldots, \mmax)$, $B = (B^m, m=0,\ldots, \mmax)$, and $B_{\text{eq}} = (B^m_{\text{eq}}, m=0,\ldots, \mmax)$.
\EndProcedure
\end{algorithmic}
\end{algorithm}

Alg.~\ref{alg:cov} is devoted to the pre-computations that set up the covariance matrices that just have to be computed once and can be reused for sampling on the same grid. We remark that the algorithm can be optimised if the discrete grid~$z$ is symmetric around zero, i.e.\ if it satisfies $z_{-i} = -z_i$. Then $A^m$ and $B^m$ just have to be computed for all $z_i \ge 0$ and the negative values follow by symmetry. This is the case for the example grid~$z$ above.

\begin{algorithm}[htb!]
\caption{Gaussian random field generation}\label{alg:GRF}
\begin{algorithmic}[0]
\Procedure{Gaussian Random Field}{$\mmax, z,\phi, A, B, B_{\text{eq}}$}
\For{$m=0,\ldots, \mmax$}
  \State Generate $g_m^{(0\ldots M-1)}(z_0)$ using $B^m_{\text{eq}}$, random samples, and Eqn.~\eqref{eq:state_initial}.
\EndFor
\For{$i=1,\ldots,n$ and $i= -1,\ldots,-n$}
  \For{$m=0,\ldots, \mmax$}
    \State Generate $g_m^{(0\ldots M-1)}(z_i)$ using Eqn.~\eqref{eq:state_space}, random samples, $A^m$, and~$B^m$.
    \If{$m>0$}
      \State Set $g_{-m}^{(0)}(z_i) \gets \overline{g_m^{(0)}(z_i)}$.
    \EndIf
  \EndFor
\EndFor
  \State Compute $T(z,\phi)$ via discrete Fourier transform (e.g. via FFT for equi-spaced $\phi$) using
       \begin{equation*}
 	T(z_i, \phi_j) \gets \frac{1}{2\mmax\sqrt{2\pi}} \sum_{m=1-\mmax}^{\mmax} g_m(z_i) e^{i \phi_j m}
       \end{equation*}
\State Return $T$
\EndProcedure
\end{algorithmic}
\end{algorithm}

Samples of the GRF on the sphere are generated with Alg.~\ref{alg:GRF} based on the pre-computed families of matrices $A$, $B$, and~$B_{\text{eq}}$. Since the computational cost to generate white noise is negligible, the complexity of the algorithm reduces to $(2n+1)$ FFT in 1d, which makes it computationally fast. Therefore the algorithm is attractive if many samples have to be computed. An example are ice crystals, which can be modelled by lognormal random fields~\cite{NMF04}. For more information on the relation of lognormal and Gaussian random fields, the reader is referred to~\cite{LS15}.

\section{Simulation}\label{sec:simulation}

To illustrate the algorithm and show the performance of the presented algorithm, we show simulation results in this section which include the generation of a sample and the convergence of the covariance function with respect to the discretisation. 

Let us first show random field samples that the algorithm generates. Therefore we fix the angular power spectrum given by
\begin{equation}\label{eq:sample_spec}
C_\ell = \left(10 + \ell^2 (\ell+1)^2\right)^{-1}.
\end{equation}
The algorithm in Section~\ref{sec:alg} generates the one-dimensional random fields $(g_m, m=0,\ldots,\mmax)$ on a discrete grid first, which are then transformed to random fields on the sphere by FFT. To give the reader an idea of~$g_m$, we show in Fig.~\ref{fig:sample_walks} samples of $g_0(\cos \theta)$ and the real part of $g_5(\cos \theta)$ along with its standard deviation or RMS expectation $\E\left( \Re ( g_m(\cos \theta))^2 \right)^{1/2}$.
\begin{figure}
\centering
\includegraphics[width=\textwidth]{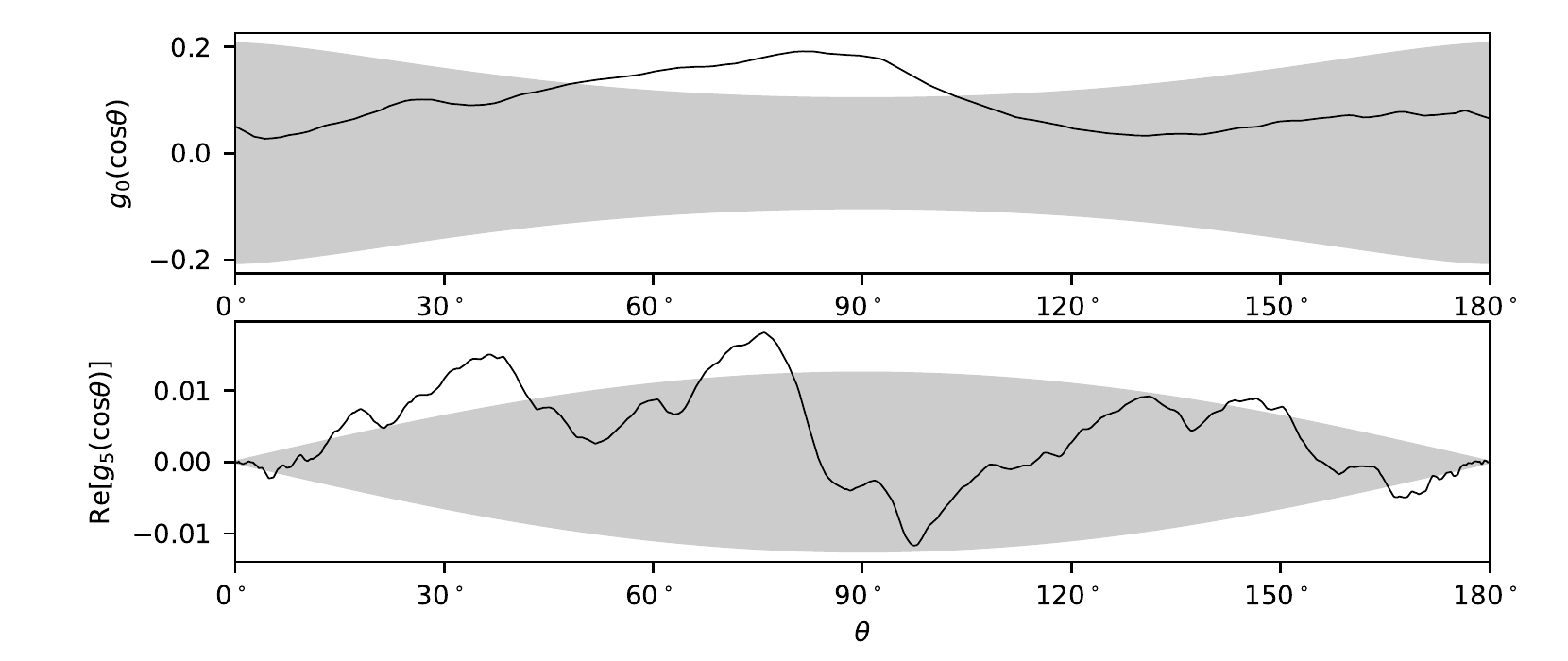}
\caption{Random walks $g_0$ and (the real part of) $g_5$ in the \emph{upper} and \emph{lower panels} respectively, for the angular power spectrum given in Eqn~(\ref{eq:sample_spec}). The \emph{black line} indicates the walks, whilst the \emph{grey shaded region} indicates the $\pm$ standard deviation ${\E}\left( \Re ( g_m )^2 \right)^{1/2}$.
}
\label{fig:sample_walks}
\end{figure}

The standard deviation is dependent on $\theta$ since the random field is of zero mean but not translation invariant.

Note that $g_5$ tends to zero at $\theta=0^\circ,180^\circ$. Since $T$ has a continuous first derivative (i.e.\ $M>1$, as it is in this case), then for all $m \neq0$ the standard deviation is zero for $\cos \theta = \pm 1$, i.e.\ the nonzero iso-latitude Fourier transforms must tend to zero near the poles to meet the smoothness criterion.
\begin{figure}
\centering
\includegraphics[width=\textwidth]{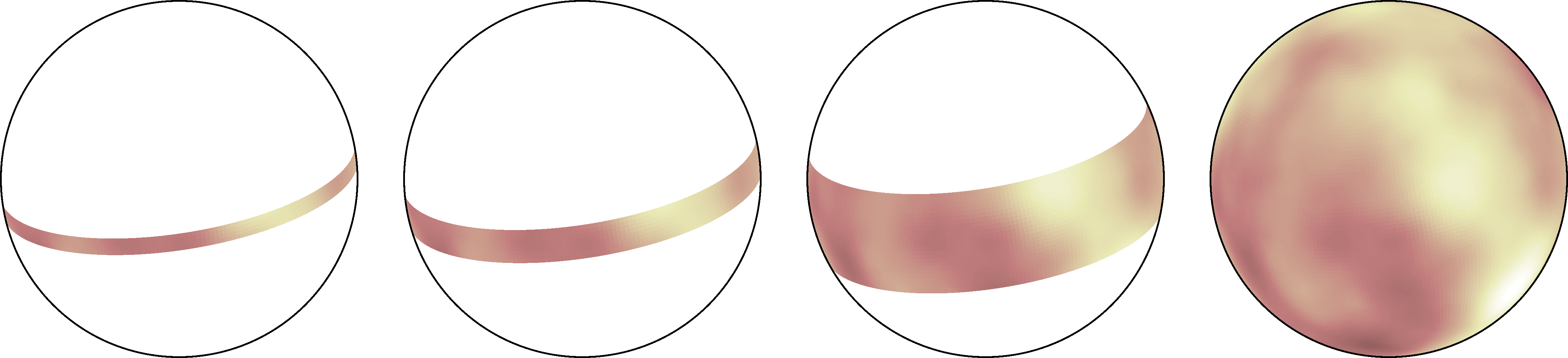}
\caption{\emph{Left} to \emph{right}, visualisation of the process of building the random field starting from the equator, i.e.\ at each iso-latitude line (constant $z$) we conditionally sample the $g_m(z)$ based on the previous line (including derivatives) and gradually work our way north and south.}
\label{fig:example_vis}
\end{figure}

The process of generating a sample is illustrated in Fig.~\ref{fig:example_vis}. 
We start with generating the random fields $g_m(0)$, which become after FFT the discrete sample of the random field at the equator. Conditionally, $g_m(z_{-1})$ and $g_m(z_1)$ are sampled and added to the picture after FFT. This process is continued until all of the sphere is covered with random numbers and the random field on the sphere is complete.

\begin{figure}
\centering
\includegraphics[width=\textwidth]{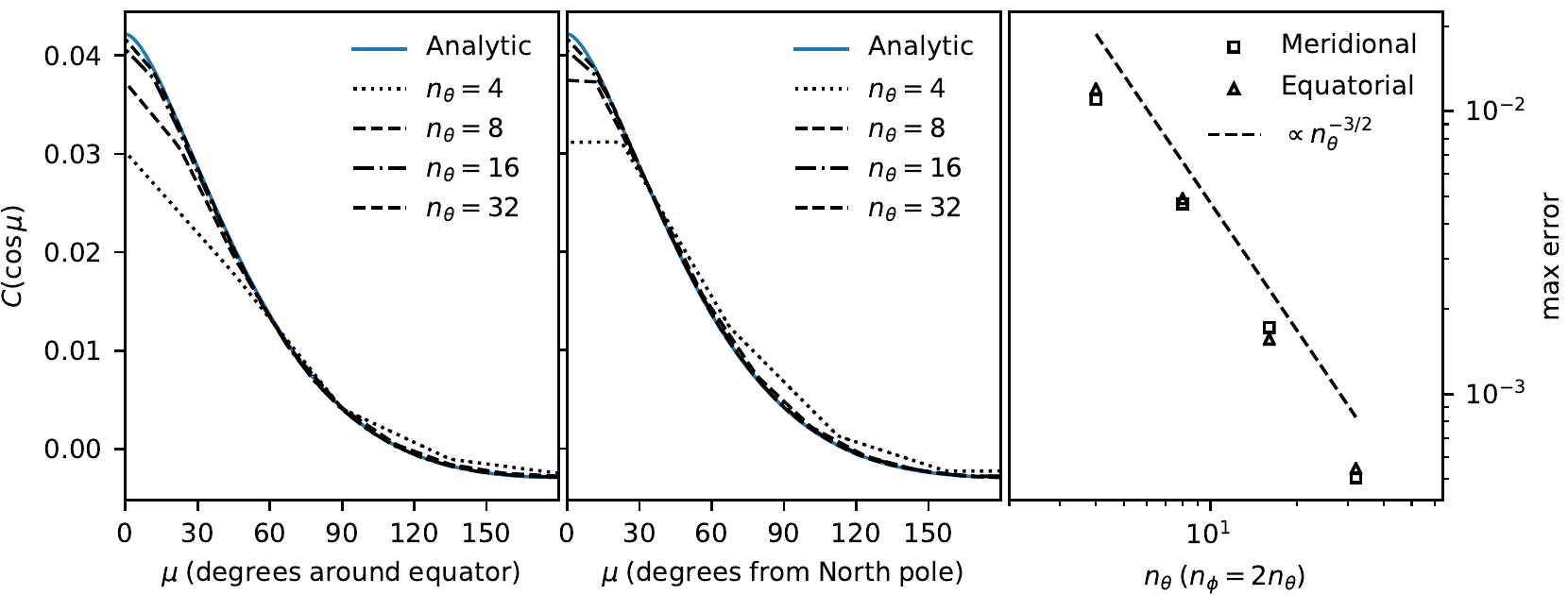}
\caption{Error analysis for an example GMRF. The \emph{left} and \emph{middle} panels estimate the covariance along equatorial and meridional lines respectively, and the analytic covariance in blue. The \emph{right panel} indicates the convergence of the error as a function of $n_\theta$.}
\label{fig:errors}
\end{figure}

A method to validate that a method for sampling a GRF is appropriate and that it converges is to compute the covariance function via the examination of many samples and compare it to the analytic one inferred from the angular power spectrum.
We have used the GRF with angular power spectrum from Eqn~(\ref{eq:sample_spec})
and corresponding covariance function
\begin{equation*}
C_T(x,y) = \sum_{\ell=0}^{\infty} \frac{2\ell+1}{4\pi} C_\ell P_\ell (\langle x,y\rangle_{\R^3}),
\end{equation*}
which is plotted in the left two panels of Fig.~\ref{fig:errors} as solid blue line. 

In order to analyse the errors of our method we have constructed filters at multiple resolutions of $n_\theta \in \{4,8,16,32\}$, keeping $\mmax = n_\theta$ and $n_\phi=2n_\theta$ (i.e.\ so the maximum angular separation between adjacent grid points is $\pi/n_\theta$). We generated $N:= 320,000$ samples $(T^{n_\theta}_j, j=1,\ldots, N)$ on all resolutions to estimate the covariance by
\begin{align*}
\overline{\Cov}(x,y) :=
  N^{-1} \sum_{j = 1}^{N}
      T^{n_\theta}_j(x) \cdot T^{n_\theta}_j(y)
\end{align*}
and to compare it to the theoretical one.
The dotted and dashed lines in the left two panels of Fig.~\ref{fig:errors} show the results for the different $n_\theta$. The left figure is based on equatorial evaluations while the middle one estimates the covariance along meridional lines.

The error was computed by taking on each grid the maximum over all grid points in a set~$\cS$, which was once the equator and once the meridian, of the
difference of the theoretical and statistical covariance, i.e.\ the error~$e^{n_\theta}$ was computed by
\begin{align*}
 e^{n_\theta}
  := \max_{x,y \in \cS}
      |\overline{\Cov}(x,y) - C_T(x,y)|.
\end{align*}
The results are shown in the right panel of Fig.~\ref{fig:errors}. The maximum error on the covariance function falls as $\mmax^{-3/2}$ (where $n_\theta=\mmax$) as can be seen in the error plot.

\bibliographystyle{siam}
\bibliography{paper} 

\end{document}